\documentclass[sn-mathphys-num]{sn-jnl}


\UseRawInputEncoding
\usepackage[numbers]{natbib}
\usepackage{lipsum}
\usepackage{amsfonts}
\usepackage{epstopdf}
\usepackage{graphicx}
\usepackage{subfigure}
\usepackage{xfrac}
\usepackage{amsmath}
\usepackage{booktabs}
\numberwithin{equation}{section}
\usepackage{amsthm}
\newtheorem{theorem}{Theorem}[section]
\newtheorem{definition}{Definition}[section]
\newtheorem{remark}{Remark}[section]
\usepackage{algorithm}
\usepackage{algorithmic}
\floatname{algorithm}{Method}
\usepackage{multirow}
\usepackage{caption}
\usepackage{tabularx}

\usepackage{hyperref} 
\hypersetup{
	hidelinks,
	colorlinks=true,
	linkcolor=blue,
	citecolor=blue,
	urlcolor = black
}

\ifpdf
\DeclareGraphicsExtensions{.eps,.pdf,.png,.jpg}
\else
\DeclareGraphicsExtensions{.eps}
\fi




\begin{document}
	
\title[mode=title]{On maximum residual block Kaczmarz method for solving large consistent linear systems}

\author[1]{ \sur{Wen-Ning Sun}}\email{222122189@st.usst.edu.cn}

\author*[1]{ \sur{Mei Qin}}\email{qin5670830@163.com}

\affil[1]{\orgdiv{College of Science}, \orgname{University of Shanghai for Science and Technology}, \city{Shanghai}, \postcode{200093},  \country{P.R. China}}



\abstract{For solving large consistent linear systems by iteration methods, inspired by the maximum residual Kaczmarz method and the randomized block Kaczmarz method, we propose the maximum residual block Kaczmarz method, which is designed to preferentially eliminate the largest block in the residual vector $r_{k}$ at each iteration. At the same time, in order to further improve the convergence rate, we construct the maximum residual average block Kaczmarz method to avoid the calculation of pseudo-inverse in block iteration, which completes the iteration by projecting the iteration vector $x_{k}$ to each row of the constrained subset of $A$ and applying different extrapolation step sizes to average them. We prove the convergence of these two methods and give the upper bounds on their convergence rates, respectively. Numerical experiments validate our theory and show that our proposed methods are superior to some other block Kaczmarz methods.}

\keywords{consistent linear system, maximum residual block Kaczmarz, maximum residual average block Kaczmarz, free pseudo-inverse, convergence property}



\maketitle

\section{Introduction}\label{sec1}

For large-scale linear systems of the following form
\begin{equation}\label{eq:1.1}
	Ax=b,\quad\text{with}\quad A\in\mathbb{R}^{m\times n}\quad\text{and}\quad b\in\mathbb{R}^m
\end{equation}
i.e., with $A$ being an m-by-n real matrix, $b$ being an m-dimensional real
vector, and $x$ being the n-dimensional unknown vector, researchers often employ iterative methods to solve them. One such method is the Kaczmarz method \cite{ref1}, which is known for its simplicity and effectiveness. The Kaczmarz method (also known as ``ART'' in \cite{ref2}), as a row iteration method, completes each iteration by projecting the current point onto the hyperplane formed by the selected row. To be more specific, let $A^{(i)}$ represents the $i$th row of $A$, and $b^{(i)}$ represents the $i$th entry of $b$. Given an initial approximation value $ x_0$, the Kaczmarz method can be expressed as follows:$$x_{k+1}=x_{k}+\frac{(b^{(i_{k})}-A^{(i_{k})}x_{k})}{\parallel A^{(i_{k})}\parallel_{2}^{2}}(A^{(i_{k})})^{T}$$ where ${{(\cdot)^{T}}}$ and $\|\cdot\|_{2}$ denote the transpose and Euclidean norm of a vector or matrix, respectively, and the target row index $i_{k} = mod (k, m) +1$.

The theory of the Kaczmarz method has significant development since its inception. Initially, Kaczmarz \cite{ref1} demonstrated the convergence of this method when the coefficient matrix $A$ is a non-singular square matrix. Subsequently, Gal\'{a}ntai \cite{ref18} provided an upper bound on its convergence rate, while Knight conducted error analysis on it under limited precision operation in \cite{ref19}. For further research literature, please refer to \cite{ref39,ref40}. Due to its simplicity and efficiency, the Kaczmarz method finds wide applications in various large-scale computing fields, including computed tomography \cite{ref20,ref21,ref22,ref23,ref24}, image reconstruction \cite{ref2,ref25,ref26}, signal processing \cite{ref26,ref27}, distributed computing \cite{ref28,ref29}, etc.

The classic Kaczmarz method iteratively updates the solution vector by selecting the working row of the coefficient matrix in sequence and projecting it orthogonally to the hyperplane where the row is located. However, when the scale of the coefficient matrix is very large, the convergence rate can be significantly slow. In 2009, Strohmer and Vershynin \cite{ref3} introduced the Randomized Kaczmarz (RK) method with expected exponential rate of convergence for solving overdetermined consistent linear systems, reigniting interest in Kaczmarz methods.They improved the row selection strategy of the Kaczmarz method, which can accelerate the convergence speed of the kaczmarz method significantly. Specifically, in the $k$th iteration of the RK method, the target row is selected according to $\Pr(\mathrm{row}=i_{k})=\sfrac{\|A^{(i_{k})}\|_{2}^{2}}{\|A\|_{F}^{2}}$, and then the current iteration solution $x_{k}$ is projected onto the hyperplane $\{x| A^{(i_{k})}x=b^{(i_{k})}\}$.

The RK method has obvious flaws in the criterion for selecting rows: if the Euclidean norm of each row of A is the same, then the probability of each row being selected degenerates to 1/m, which means uniform sampling. Additionally, if the Euclidean norm of a certain row in A is much smaller than other rows, that row will hardly be selected, thereby greatly slowing down the convergence rate of the RK method. Therefore, Bai and Wu \cite{ref4} proposed a greedy randomized Kaczmarz(GRK) method, which introduced a greedy probability criterion to obtain the larger component of the module of the residual vector in each iteration, so that it would be eliminated first in the iteration process, and thus accelerate the convergence rate. They proved the convergence of the GRK method for a consistent linear system and the expected error convergence rate. It is worth noting that Ansorge \cite{ref37} proposed a maximal residual Kaczmarz(MRK) method, and Popa analyzed it in \cite{ref38}.  Similar to the work row obtained by the GRK method, this method selects the target working row index $i_{k}$ so that the $i_{k}$-th component of the residual has relatively the largest absolute value compared to other components, i.e. $i_{k}=\arg\max\limits_{1\le i\le m}|b^{(i)}-A^{(i)}x_{k}| $. Based on the GRK method, Bai and Wu \cite{ref5} constructed the relaxed greedy randomized Kaczmarz(RGRK) method by introducing a relaxation factor, and showed that the RGRK method was more effective than the GRK method when the relaxation factor was selected appropriately. In \cite{ref6}, Zhang developed a new greedy Kaczmarz method and proved the convergence of the method. For more research on the randomized Kaczmarz method, refer to \cite{ref12,ref13}.

For iterative solutions of large linear equations, in order to accelerate the convergence rate of Kaczmarz method, it is a natural idea to use block iteration instead of single row iteration, so block Kaczmarz method emerges as the times require. Bai proved the convergence of the block kaczmarz method in \cite{ref39}. Needell et al. pointed out in \cite{ref7} that matrix has good row paving, introduced a block strategy that depends on matrix eigenvalues, and proposed the first block Kaczmarz method with (expected) linear convergence to solve the overdetermined least squares problems, which projected the current iterative solution vector onto the solution space of constrained subsets at each iteration step. To be specific, if the subset $\mathcal{J}_{k}$ is selected at the $k$-th iteration, the iteration formula for $x_{k}$ can be expressed as : $x_{k+1}=x_{k}+A_{\mathcal{J}_{k}}^{\dagger}(b_{\mathcal{J}_{k}}-A_{\mathcal{J}_{k}}x_{k})$ with $A_{\mathcal{J}_{k}}=A({\mathcal{J}}_{k},:)$, $b_{\mathcal{J}_{k}}=b({\mathcal{J}}_{k})$. Needell et al. then proposed a randomized block kaczmarz(RBK) method for solving the least squares  in \cite{ref8}. Necoara \cite{ref9} presented a group of randomized average block Kaczmarz(RABK) methods that involve a subset of the constraints and extrapolated step sizes. As a natural follow-up to the RBK method, Liu and Gu \cite{ref14} proposed the greedy randomized block kaczmarz(GRBK) method for solving consistent linear systems. Inspired by the GRK and RABK methods, Miao and Wu \cite{ref10}  proposed a greedy randomized average block Kaczmarz (GRABK) method to avoid the expensive cost of the GRBK method to solve the pseudo-inverse of the selected submatrix in iteration. Niu and Zheng \cite{ref11} proposed a greedy block Kaczmarz(GBK) method for solving large-scale consistent linear systems, proved the convergence of the method, and showed that the method can be more efficient than the GRK method if parameter $\eta$ is chosen appropriately. For more research on the block Kaczmarz method, refer to \cite{ref15,ref16,ref17,ref30,ref31,ref32,ref33,ref34}.

In this paper, we construct the maximum residual block Kaczmarz (denoted as MRBK) method, which is designed to preferentially eliminate the largest block in the residual vector $r_{k}$ at each iteration. At the same time, in order to avoid the pseudo-inverse calculation of the MRBK method during iteration, we further develop the maximum residual average block Kaczmarz (denoted as MRABK) method, which completes the iteration by projecting the iteration vector $x_{k}$ to each row of  $A_{\mathcal{V}_{i_{k}}}$ and applying different extrapolation step sizes to average them. We prove the convergence of these two methods under a consistent linear system and give the upper bounds on their convergence rates, respectively. Numerical experiments show that MRABK method is superior to MRBK method, and both methods are more efficient than the GRK, MRK, RBK, GRBK and GBK methods.

The organization of this paper is as follows. In Section \ref{sec:2} we introduce the maximum residual block Kaczmarz method and establish its convergence theory. In Section \ref{sec:3} we introduce the maximum residual average block Kaczmarz method and establish its convergence theory. The effectiveness of our proposed methods are verified by numerical experiments in Section \ref{sec:4}. Finally, in Section \ref{sec:5}, we end the paper with conclusions.

\textbf{Notation and some basic assumptions:} For a matrix $A\in\mathbb{R}^{m\times n} $, $\|A\|_{2}$, $\|A\|_{F}$ and $A^{\dagger}$ signify the Euclidean norm, Frobenius norm, and Moore-Penrose pseudoinverse of matrix $A$, respectively. We define $A$ as a standardized matrix if the Euclidean norm of each row of $A$ is equal to 1, i.e. $\|A^{(i)}\|_{2}=1,i=1,2,\ldots,m $. Similarly, for a given vector $u$, $\|u\|_{2}$ also represents its Euclidean norm.   The notation $\sigma_{min}(A)$ and $\sigma_{max}(A)$ are employed to express the smallest nonzero and largest singular values of matrix $A$, respectively. Additionally, let us define the set $[m]$ as $\{1, 2, ...  , m\}$, where $m$ is an arbitrary positive integer. We consider the collection $\mathcal{V}=\{\mathcal{V}_1,\mathcal{V}_2,\ldots,\mathcal{V}_t\}$ as a partition of $[m]$ if the index sets $\mathcal{V}_i$, where $i = 1, 2, \ldots , t$, satisfy the conditions $\mathcal{V}_i\cap\mathcal{V}_j = \emptyset$ for $i\neq j $, and $\cup_{i=1}^{t}\mathcal{V}_i=[m]$. Furthermore, given an row index set $\mathcal{V}_i$, we use $A_{\mathcal{V}_{i}}$ to denote the row submatrix of matrix $A$ indexed by $\mathcal{V}_{i}$, use $b_{\mathcal{V}_{i}}$ to denote the subvector of vector $b$. We use $I$ to represent the identity matrix of appropriate size. Define the randomized partition of $[m]$ as $\mathcal{V}=\{\mathcal{V}_1,\mathcal{V}_2,\ldots,\mathcal{V}_t\}$ with
\begin{equation}\label{eq:1.2}
  \mathcal{V}_{i}=\left\{\pi(k):k=\left\lfloor(i-1)m/t\right\rfloor+1,\left\lfloor(i-1)m/t\right\rfloor+2,\ldots,\left\lfloor im/t\right\rfloor\right\}
\end{equation}
where $i = 1, 2, \ldots , t$, we assume that the row partition anywhere else in this paper is as shown in \eqref{eq:1.2}.

\section{Maximum residual block Kaczmarz method}
\label{sec:2}

In this section, inspired by the idea of the MRK \cite{ref37}, RBK \cite{ref8} and GRBK \cite{ref14} methods, we are going to construct the maximum residual block Kaczmarz (MRBK) method and analyze its convergence property.

There are typically two approaches to accelerate the Kaczmarz method: the first approach focuses on selecting working rows more efficiently to achieve faster convergence in each iteration, while the second approach aims to utilize row block iteration instead of single-row iteration for acceleration. Building upon these approaches, we propose the MRBK method as follow as Method \ref{alg:1}. Firstly, we partition the rows of matrix $A$ to obtain the row block division $\mathcal{V}$ of $A$, i.e. $\{A_{\mathcal{V}_{1}},A_{\mathcal{V}_{2}},\ldots,A_{\mathcal{V}_{t}}\}$. Next, we denote $r_{k}^{(i)}$ as the $i$th block component corresponding to the residual vector $r_{k}$, then $r_{k}^{(i)}=b_{\mathcal{V}_{i}}-A_{\mathcal{V}_{i}}x_{k}$, where $i=1, 2,\ldots,t$. We select the working row block $\mathcal{V}_{i_{k}}$ according to $i_{k}=\arg\max\limits_{1\le i\le t}\|b_{\mathcal{V}_{i}}-A_{\mathcal{V}_{i}}x_{k}\|_2^2 $, ensuring that the largest residual is eliminated first in each iteration within the block. This significantly improves the convergence rate.

Next, we try to analyze the differences and improvements of the MRBK method compared to other three methods:

1. The MRBK method vs. the MRK method: The MRBK method accelerates the MRK method naturally by utilizing row block iterations instead of single row iterations.

2. The MRBK method vs. (the RBK and GRBK methods): In comparison to the RBK method, the GRBK method improves the RBK method by introducing a new greedy probability criterion to randomly select the index of the row blocks, which ensures that row blocks with large residual values are prioritized for elimination, thus accelerating the RBK method. However, the GRBK method requires constructing the index set of row blocks and then selecting them based on probability in each iteration. In contrast, our proposed MRBK method selects the row blocks with the largest residual directly, enhancing the iteration efficiency. In fact, along with the idea of ``greedy", our method can be called ``extremely greedy" when it comes to selecting the index of the row blocks.

\begin{algorithm}
	\caption{the MRBK Method for solving the linear system}
	\label{alg:1}
	\begin{algorithmic}
		\STATE	\textbf{Input:} $A$, $b$, $\ell$, ${x}_{0}$.
		\STATE  \textbf{Output:} ${x}_{\ell}$
		
		\STATE	1: Let $\{\mathcal{V}_1,\mathcal{V}_2,\ldots,\mathcal{V}_t\}$ be a partition of $\mathrm[m]$
		
		\STATE	2: \textbf{for} $k=0,1,\ldots,\ell-1$ \textbf{do}
		\STATE	3: Select $i_{k}=\arg\max\limits_{1\le i\le t}\|b_{\mathcal{V}_{i}}-A_{\mathcal{V}_{i}}x_{k}\|_2^2 $
		
		\STATE	4: Set $x_{k+1}=x_{k}+A_{\mathcal{V}_{i_{k}}}^{\dagger}\left(b_{\mathcal{V}_{i_{k}}}-A_{\mathcal{V}_{i_{k}}}x_{k}\right)$
		\STATE	5:  \textbf{end for}
	\end{algorithmic}
\end{algorithm}
\begin{definition}\label{def:2.1}
	\cite{ref8} (Row paving) $A (t, \alpha, \beta)$  row paving of an  $m \times n$  matrix $A$ is a partition
	$\mathcal{V}=\{\mathcal{V}_1,\mathcal{V}_2,\ldots,\mathcal{V}_t\}$  of the rows such that
	$$ \alpha\leq\sigma_{min}^2(A_{\mathcal{V}_{i}})\quad and \quad \sigma_{max}^2(A_{\mathcal{V}_{i}})\leq \beta  $$ for each $\mathcal{V}_{i}\in\mathcal{V}$.
	
	Where we denote by $A_{\mathcal{V}_{i}}$ the $|\mathcal{V}_{i}| \times n $ submatrix of $A$. We refer to the number $t$ as the size of the paving, and the numbers $\alpha$ and $\beta$ are called the lower and upper paving bounds.
\end{definition}

For the convergence property of the maximum residual block Kaczmarz(MRBK) method, we can establish the following theorem.

\begin{theorem}\label{thm:2.1}
	Let the linear system \eqref{eq:1.1} be consistent, for a fixed partition $\mathcal{V}=\{\mathcal{V}_1,\mathcal{V}_2,\ldots,\mathcal{V}_t\}$ of $\mathrm[m]$,
	starting from any initial vector $x_{0}\in\mathcal{R}(A^{T})$ , the
	iteration sequence $\{x_{k}\}_{k=0}^{\infty}$  generated by the MRBK method, converges to the unique least-norm solution $x_{\star}=A^{\dagger}b$. Moreover, for any $k\geq0$, we have
	\begin{equation}
		\centering \|x_{1}-x_{\star}\|_{2}^{2}\leq\left(1-\frac{\sigma_{min}^2(A)}{\beta t}\right)\| x_{0}-x_{\star}\|_{2}^{2}
	\end{equation}
	and
	\begin{equation}
		\centering \|x_{k+1}-x_{\star}\|_{2}^{2}\leq\left(1-\frac{\sigma_{min}^2(A)}{\beta(t-1)}\right)^{k}\left(1-\frac{\sigma_{min}^2(A)}{\beta t}\right)\| x_{0}-x_{\star}\|_{2}^{2}
	\end{equation}
\end{theorem}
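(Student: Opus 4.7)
The plan is to follow the standard Pythagorean-plus-paving template for block Kaczmarz, but with two twists that produce the two distinct rate constants: the first iterate loses a factor $1/t$, while every subsequent iterate only loses a factor $1/(t-1)$ because the block just used contributes a zero residual.

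First, I would record two structural facts about the iterate. Since $x_0\in\mathcal{R}(A^T)$ and each correction $A_{\mathcal{V}_{i_k}}^{\dagger}r_k^{(i_k)}$ lies in $\mathcal{R}(A_{\mathcal{V}_{i_k}}^T)\subseteq\mathcal{R}(A^T)$, induction gives $x_k-x_\star\in\mathcal{R}(A^T)$, so $\|A(x_k-x_\star)\|_2^2\ge\sigma_{\min}^2(A)\|x_k-x_\star\|_2^2$. Next, because $A_{\mathcal{V}_{i_k}}x_{k+1}=b_{\mathcal{V}_{i_k}}$ and the update vector lies in $\mathcal{R}(A_{\mathcal{V}_{i_k}}^T)$, the vectors $x_{k+1}-x_\star$ and $x_{k+1}-x_k$ are orthogonal, which yields the Pythagorean identity
\begin{equation*}
\|x_k-x_\star\|_2^2=\|x_{k+1}-x_\star\|_2^2+\|x_{k+1}-x_k\|_2^2.
\end{equation*}

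Second, I would bound $\|x_{k+1}-x_k\|_2^2$ from below. Writing $r_k^{(i_k)}=A_{\mathcal{V}_{i_k}}(x_\star-x_k)\in\mathcal{R}(A_{\mathcal{V}_{i_k}})$, the standard pseudoinverse estimate gives $\|A_{\mathcal{V}_{i_k}}^{\dagger}r_k^{(i_k)}\|_2^2\ge\|r_k^{(i_k)}\|_2^2/\sigma_{\max}^2(A_{\mathcal{V}_{i_k}})\ge\|r_k^{(i_k)}\|_2^2/\beta$ by Definition~2.1. So we need a lower bound on $\|r_k^{(i_k)}\|_2^2$ in terms of $\|r_k\|_2^2=\sum_{i=1}^t\|r_k^{(i)}\|_2^2$, and this is exactly where the maximum-residual selection rule enters: the maximum of $t$ nonnegative numbers is at least $1/t$ of their sum.

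Third, and this is the subtle step that produces the stronger rate for $k\ge 1$, I would observe that once we have performed one MRBK step, the block $\mathcal{V}_{i_{k-1}}$ just used satisfies $r_k^{(i_{k-1})}=0$. Hence the sum $\|r_k\|_2^2$ has at most $t-1$ nonzero summands, and the max-selection gives $\|r_k^{(i_k)}\|_2^2\ge\|r_k\|_2^2/(t-1)$ for $k\ge 1$, while for $k=0$ we only have the weaker bound with $t$. Combining with the Pythagorean identity and the $\sigma_{\min}^2(A)$ lower bound on $\|r_k\|_2^2$ yields
\begin{equation*}
\|x_1-x_\star\|_2^2\le\left(1-\frac{\sigma_{\min}^2(A)}{\beta t}\right)\|x_0-x_\star\|_2^2
\end{equation*}
and, by induction on $k$ using the $t-1$ estimate at every later step,
\begin{equation*}
\|x_{k+1}-x_\star\|_2^2\le\left(1-\frac{\sigma_{\min}^2(A)}{\beta(t-1)}\right)^{k}\|x_1-x_\star\|_2^2,
\end{equation*}
which chains to the claimed bound. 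Convergence to the least-norm solution $x_\star=A^{\dagger}b$ then follows because the contraction forces $\|x_{k+1}-x_\star\|_2\to 0$ while each $x_k$ stays in $\mathcal{R}(A^T)$, where $x_\star$ is the unique such point satisfying $Ax=b$.

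I expect the main obstacle to be justifying cleanly the ``effective block count drops to $t-1$'' step. It is essentially a one-line observation once one writes $r_k^{(i_{k-1})}=b_{\mathcal{V}_{i_{k-1}}}-A_{\mathcal{V}_{i_{k-1}}}x_k=0$, but it must be phrased carefully so that the resulting index-removal argument still applies to the max (and not only to an average), and so that the base case $k=0$ genuinely needs the weaker $1/t$ factor. Everything else is a routine combination of Pythagoras, the row-paving upper bound $\beta$, and the range-space singular-value bound $\sigma_{\min}^2(A)$.
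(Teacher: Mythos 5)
Your proposal is correct and follows essentially the same route as the paper's proof: the Pythagorean identity for the orthogonal projection step, the lower bound $\|A_{\mathcal{V}_{i_k}}^{\dagger}r_k^{(i_k)}\|_2^2\geq \|r_k^{(i_k)}\|_2^2/\beta$ from the paving, the key observation that $r_k^{(i_{k-1})}=0$ for $k\geq 1$ (so the max exceeds $\|b-Ax_k\|_2^2/(t-1)$ rather than only $/t$), and the final bound $\|A(x_k-x_\star)\|_2^2\geq\sigma_{\min}^2(A)\|x_k-x_\star\|_2^2$ using $x_k-x_\star\in\mathcal{R}(A^T)$. If anything, you are slightly more explicit than the paper in justifying the induction that keeps $x_k-x_\star$ in $\mathcal{R}(A^T)$ and in noting that $r_k^{(i_k)}$ lies in $\mathcal{R}(A_{\mathcal{V}_{i_k}})$ so the pseudoinverse estimate applies.
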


\begin{proof}
	From the definition of the MRBK method, for a partition $\mathcal{V}=\{\mathcal{V}_1,\mathcal{V}_2,\ldots,\mathcal{V}_t\}$,
	 $k =0, 1, 2, \ldots$  and $i_k \in \{1,2,\ldots,t\}$ , we have $$x_{k+1}-x_{\star}=x_{k}-x_{\star}+A_{\mathcal{V}_{i_{k}}}^{\dagger}\left(b_{\mathcal{V}_{i_{k}}}-A_{\mathcal{V}_{i_{k}}}x_{k}\right)$$
	Since $A_{\mathcal{V}_{i_{k}}}x_{\star}=b_{\mathcal{V}_{i_{k}}}$, it holds that
	$$x_{k+1}-x_{\star}=x_{k}-x_{\star}-A_{\mathcal{V}_{i_{k}}}^{\dagger}A_{\mathcal{V}_{i_{k}}}\left(x_{k}-x_{\star}\right)$$
	Since $A_{\mathcal{V}_{i_{k}}}^{\dagger}A_{\mathcal{V}_{i_{k}}}$ is an orthogonal projector, using the Pythagorean Theorem we have the following relation
	$$\|x_{k+1}-x_{\star}\|_{2}^{2}=\|x_{k}-x_{\star}\|_{2}^{2}-\|A_{\mathcal{V}_{i_{k}}}^{\dagger}A_{\mathcal{V}_{i_{k}}}\left(x_{k}-x_{\star}\right)\|_{2}^{2}$$
	We note that 
	\begin{align}\label{eq:2.3}
		\|A_{\mathcal{V}_{i_{k}}}^{\dagger}A_{\mathcal{V}_{i_{k}}}\left(x_{k}-x_{\star}\right)\|_{2}^{2}&\geq\sigma_{min}^{2}\bigl(A_{\mathcal{V}_{i_{k}}}^{\dagger})\|A_{\mathcal{V}_{i_{k}}}\left(x_{k}-x_{\star}\right)\|_{2}^{2}\nonumber\\
		&=\frac{1}{\sigma_{max}^{2}\bigl(A_{\mathcal{V}_{i_{k}}})}\|A_{\mathcal{V}_{i_{k}}}\left(x_{k}-x_{\star}\right)\|_{2}^{2}\nonumber\\
		&\geq\frac{1}{\beta}\|A_{\mathcal{V}_{i_{k}}}\left(x_{k}-x_{\star}\right)\|_{2}^{2}\nonumber\\
		&=\frac{1}{\beta}\|b_{\mathcal{V}_{i_{k}}}-A_{\mathcal{V}_{i_{k}}}x_{k}\|_{2}^{2}\nonumber\\
		&=\frac{1}{\beta}\max\limits_{1\le i\le t}\|b_{\mathcal{V}_{i}}-A_{\mathcal{V}_{i}}x_{k}\|_{2}^{2}
	\end{align}
	Furthermore, from Method \ref{alg:1}, we have
	$$\begin{aligned}
		b_{\mathcal{V}_{i_{k}}}-A_{\mathcal{V}_{i_{k}}}x_{k+1}&=b_{\mathcal{V}_{i_{k}}}-A_{\mathcal{V}_{i_{k}}}\left(x_{k}+A_{\mathcal{V}_{i_{k}}}^{\dagger}\bigl(b_{\mathcal{V}_{i}}-A_{\mathcal{V}_{i}}x_{k})\right)\\
		&=b_{\mathcal{V}_{i_{k}}}-A_{\mathcal{V}_{i_{k}}}x_{k}-A_{\mathcal{V}_{i_{k}}}A_{\mathcal{V}_{i_{k}}}^{\dagger}\left(b_{\mathcal{V}_{i_{k}}}-A_{\mathcal{V}_{i_{k}}}x_{k}\right)\\
		&=b_{\mathcal{V}_{i_{k}}}-A_{\mathcal{V}_{i_{k}}}x_{k}-A_{\mathcal{V}_{i_{k}}}A_{\mathcal{V}_{i_{k}}}^{\dagger}b_{\mathcal{V}_{i_{k}}}+A_{\mathcal{V}_{i_{k}}}A_{\mathcal{V}_{i_{k}}}^{\dagger}A_{\mathcal{V}_{i_{k}}}x_{k}\\
		&=b_{\mathcal{V}_{i_{k}}}-A_{\mathcal{V}_{i_{k}}}A_{\mathcal{V}_{i_{k}}}^{\dagger}b_{\mathcal{V}_{i_{k}}}-A_{\mathcal{V}_{i_{k}}}x_{k}+A_{\mathcal{V}_{i_{k}}}x_{k}\\
		&=A_{\mathcal{V}_{i_{k}}}x_{\star}-A_{\mathcal{V}_{i_{k}}}A_{\mathcal{V}_{i_{k}}}^{\dagger}A_{\mathcal{V}_{i_{k}}}x_{\star}\\
		&=A_{\mathcal{V}_{i_{k}}}x_{\star}-A_{\mathcal{V}_{i_{k}}}x_{\star}\\
		&=0
	\end{aligned}$$
	Thus we can obtain that for $k =1, 2, \ldots$, that
	\begin{align}\label{eq:2.4}
		\|b-Ax_{k}\|_{2}^{2}&=\sum_{\mathcal{V}_{i_{k}}\in\mathcal{V}\setminus\mathcal{V}_{i_{k-1}}}\|b_{\mathcal{V}_{i_{k}}}-A_{\mathcal{V}_{i_{k}}}x_{k}\|_{2}^{2}\nonumber\\
		&\leq\bigl(t-1)\max\limits_{1\le i\le t}\|b_{\mathcal{V}_{i}}-A_{\mathcal{V}_{i}}x_{k}\|_{2}^{2}
	\end{align}
	
	When $k=0$, we also obtain 
	$$\begin{aligned}
		\|b-Ax_{0}\|_{2}^{2}\leq t\max\limits_{1\le i\le t}\|b_{\mathcal{V}_{i}}-A_{\mathcal{V}_{i}}x_{0}\|_{2}^{2}
	\end{aligned}$$ 
	Hence \begin{align}\label{eq:2.5}
		\max\limits_{1\le i\le t}\|b_{\mathcal{V}_{i}}-A_{\mathcal{V}_{i}}x_{k}\|_{2}^{2}\geq\frac{1}{t-1}\|b-Ax_{k}\|_{2}^{2}
	\end{align}
	Combining \eqref{eq:2.3}, \eqref{eq:2.4} and \eqref{eq:2.5}, we finally have
	$$\begin{aligned}
		\|x_{k+1}-x_{\star}\|_{2}^{2}&\leq\|x_{k}-x_{\star}\|_{2}^{2}-\frac{1}{\beta}\frac{\|b-Ax_{k}\|_{2}^{2}}{t-1}\\
		&=\|x_{k}-x_{\star}\|_{2}^{2}-\frac{1}{\beta}\frac{\|A\bigl(x_{k}-x_{\star})\|_{2}^{2}}{t-1}\\
		&\leq\|x_{k}-x_{\star}\|_{2}^{2}-\frac{\sigma_{min}^{2}(A)}{\beta\bigl(t-1)}\|x_{k}-x_{\star}\|_{2}^{2}\\
		&=\bigl(1-\frac{\sigma_{min}^{2}(A)}{\beta\bigl(t-1)})\|x_{k}-x_{\star}\|_{2}^{2}
	\end{aligned}$$   	
\end{proof} 
From Theorem \ref{thm:2.1}, we find that the convergence factor of the MRBK method is$$\rho_{MRBK}=1-\frac{\sigma_{min}^{2}(A)}{\beta\bigl(t-1)}$$ and from Theorem 3.1 in \cite{ref14} we get that the convergence factor of the GRBK method is $$\rho_{GRBK}=1-\frac{\zeta}{2}\left(\frac{\|A\|_{F}^{2}}{\|A\|_{F}^{2}+\zeta}+1\right)\frac{\sigma_{min}^{2}(A)}{\beta\|A\|_{F}^{2}}$$  where $\zeta= \min\limits_{\mathcal{V}_{i}\in \mathcal{V}}\|A_{\mathcal{V}_{i}}\|_{F}^{2}$.

If the matrix $A$ is a standardized matrix, we obtain from Theorem 2.1 in \cite{ref14} that the convergence factor for the RBK method is $$\rho_{RBK}=1-\frac{\sigma_{min}^{2}(A)}{\beta m}$$
We observe that $\rho_{MRBK}< \rho_{RBK}$ as long as $t-1<m$. In order to compare the convergence factors of the MRBK and GRBK methods, we consider rewriting $\rho_{MRBK}$: $$\rho_{MRBK}=1-\frac{\sigma_{min}^{2}(A)}{\beta\bigl(t-1)}=1-\frac{\|A\|_{F}^{2}}{t-1}\frac{\sigma_{min}^{2}(A)}{\beta\|A\|_{F}^{2}}$$ For the row paving $\{A_{\mathcal{V}_{1}},A_{\mathcal{V}_{2}},\ldots,A_{\mathcal{V}_{t}}\}$ of standardized matrix $A$, we assume that every cardinality of $A_{\mathcal{V}_{i}}$ is equal, i.e. equal to $\frac{m}{t}$. Thus $$\frac{\|A\|_{F}^{2}}{t-1}=\frac{m}{t-1}\quad and \quad \frac{\zeta}{2}\left(\frac{\|A\|_{F}^{2}}{\|A\|_{F}^{2}+\zeta}+1\right)=\frac{m}{2t}\left(\frac{t}{t-1}+1\right)=\frac{1}{2}\left(\frac{m}{t-1}+\frac{m}{t}\right)$$ We note that when the size of the coefficient matrix $A$ is very large, $m$ is usually much larger than $t$, so the convergence factor of the MRBK method is slightly less than or approximately equal to the convergence factor of the GRBK method.

\begin{remark}
	Even if the convergence factors of the MRBK and GRBK methods are almost equal, the convergence rate of the MRBK method is still faster than that of the GRBK method due to the lower computational cost of selecting working row block $A_{\mathcal{V}_{i_{k}}}$. The numerical experiments in Section \ref{sec:4} verify our inference.
\end{remark}

\section{Maximum residual average block Kaczmarz method}
\label{sec:3}

In this section, we consider further improvements to the MRBK method. In step 4 of Method \ref{alg:1}, each update of $x_{k}$ requires the pseudo-inverse of $A_{\mathcal{V}_{i_{k}}}$ to be applied to the vector, which is computationally expensive when the size of matrix $A$ is very large. We develop the Maximum residual average block Kaczmarz (MRABK) method by projecting $x_{k}$ onto each row of $A_{\mathcal{V}_{i_{k}}}$ and averaging them with different extrapolation steps, avoiding the computation of pseudo-inverse and greatly saving the computational cost at each iteration, as shown in Method \ref{alg:2}.
\begin{algorithm}
	\caption{the MRABK Method for solving the linear system}
	\label{alg:2}
	\begin{algorithmic}
		\STATE	\textbf{Input:} $A$, $b$, $\ell$, $\omega \in (0,2)$ and ${x}_{0}$.
		\STATE  \textbf{Output:} ${x}_{\ell}$
		
		\STATE	1: Let $\{\mathcal{V}_1,\mathcal{V}_2,\ldots,\mathcal{V}_t\}$ be a partition of $\mathrm[m]$
		
		\STATE	2: \textbf{for} $k=0,1,\ldots,\ell-1$ \textbf{do}
		\STATE	3: Select $i_{k}=\arg\max\limits_{1\le i\le t}\|b_{\mathcal{V}_{i}}-A_{\mathcal{V}_{i}}x_{k}\|_2^2 $
		\STATE	4:Compute $\alpha_{k}=\omega\frac{\|b_{\mathcal{V}_{i_{k}}}-A_{\mathcal{V}_{i_{k}}}x_{k}\|_{2}^{2}\|A_{\mathcal{V}_{i_{k}}}\|_{F}^{2}}{\|A_{\mathcal{V}_{i_{k}}}^{T}(b_{\mathcal{V}_{i_{k}}}-A_{\mathcal{V}_{i_{k}}}x_{k})\|_{2}^{2}}$
		\STATE	5: Set $x_{k+1}=x_{k}+\alpha_{k}\frac{A_{\mathcal{V}_{i_{k}}}^{T}(b_{\mathcal{V}_{i_{k}}}-A_{\mathcal{V}_{i_{k}}}x_{k})}{\|A_{\mathcal{V}_{i_{k}}}\|_{F}^{2}}$
		\STATE	6:  \textbf{end for}
	\end{algorithmic}
\end{algorithm}

For the convergence property of the maximum residual average block Kaczmarz(MRABK) method, we can establish the following theorem.

\begin{theorem}\label{thm:3.1}
	Let the linear system \eqref{eq:1.1} be consistent, for a fixed partition $\mathcal{V}=\{\mathcal{V}_1,\mathcal{V}_2,\ldots,\mathcal{V}_t\}$ of $\mathrm[m]$ and $\omega \in (0,2)$, starting from any initial vector $x_{0}\in\mathcal{R}(A^{T})$, the
	iteration sequence $\{x_{k}\}_{k=0}^{\infty}$  generated by the MRABK method, converges to the unique least-norm solution $x_{\star}=A^{\dagger}b$. Moreover, for any $k\geq0$, we have
	\begin{equation}
		\|x_{1}-x_{\star}\|_{2}^{2}\leq\left(1-(2\omega-\omega^{2})\frac{\sigma_{min}^2(A)}{\beta t}\right)\| x_{0}-x_{\star}\|_{2}^{2}
	\end{equation}
	and
	\begin{equation}
		\centering \|x_{k+1}-x_{\star}\|_{2}^{2}\leq\left(1-(2\omega-\omega^{2})\frac{\sigma_{min}^{2}(A)}{\beta \bigl(t-1)}\right)^{k}\left(1-(2\omega-\omega^{2})\frac{\sigma_{min}^2(A)}{\beta t}\right)\| x_{0}-x_{\star}\|_{2}^{2}
	\end{equation}
\end{theorem}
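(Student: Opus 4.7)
The plan is to mirror the structure of the proof of Theorem~\ref{thm:2.1}, the main difference being that the pseudoinverse projection is replaced by a gradient-type step whose scale $\alpha_k$ is tuned precisely so that the cross term in $\|x_{k+1}-x_\star\|_2^2$ collapses cleanly. Writing $r_k^{(i_k)} := b_{\mathcal{V}_{i_k}}-A_{\mathcal{V}_{i_k}}x_k$ and using consistency to get $A_{\mathcal{V}_{i_k}}(x_k-x_\star)=-r_k^{(i_k)}$, I would expand
$$\|x_{k+1}-x_\star\|_2^2=\|x_k-x_\star\|_2^2+2\alpha_k\frac{(x_k-x_\star)^{T}A_{\mathcal{V}_{i_k}}^{T}r_k^{(i_k)}}{\|A_{\mathcal{V}_{i_k}}\|_F^{2}}+\alpha_k^{2}\frac{\|A_{\mathcal{V}_{i_k}}^{T}r_k^{(i_k)}\|_2^{2}}{\|A_{\mathcal{V}_{i_k}}\|_F^{4}},$$
simplify the middle term to $-2\alpha_k\|r_k^{(i_k)}\|_2^{2}/\|A_{\mathcal{V}_{i_k}}\|_F^{2}$, and then substitute the explicit formula for $\alpha_k$ from Method~\ref{alg:2} to obtain the identity
$$\|x_{k+1}-x_\star\|_2^{2}=\|x_k-x_\star\|_2^{2}-(2\omega-\omega^{2})\frac{\|r_k^{(i_k)}\|_2^{4}}{\|A_{\mathcal{V}_{i_k}}^{T}r_k^{(i_k)}\|_2^{2}}.$$
This is the source of the factor $2\omega-\omega^{2}$ that appears throughout the theorem.

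Next, I would bound the denominator by the row-paving estimate $\|A_{\mathcal{V}_{i_k}}^{T}r_k^{(i_k)}\|_2^{2}\le\sigma_{\max}^{2}(A_{\mathcal{V}_{i_k}})\,\|r_k^{(i_k)}\|_2^{2}\le\beta\,\|r_k^{(i_k)}\|_2^{2}$ from Definition~\ref{def:2.1}, reducing the deducted term to $(2\omega-\omega^{2})\|r_k^{(i_k)}\|_2^{2}/\beta$. Because the MRABK rule picks $i_k$ to maximise the block residual, $\|r_k^{(i_k)}\|_2^{2}=\max_{1\le i\le t}\|r_k^{(i)}\|_2^{2}$, so the subsequent steps of the Theorem~\ref{thm:2.1} proof transfer verbatim: for $k=0$ use $\max_i\|r_0^{(i)}\|_2^{2}\ge \|b-Ax_0\|_2^{2}/t$, and for $k\ge 1$ use the one-block-vanishes bound $\max_i\|r_k^{(i)}\|_2^{2}\ge\|b-Ax_k\|_2^{2}/(t-1)$. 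Finally, combining with $\|b-Ax_k\|_2^{2}=\|A(x_k-x_\star)\|_2^{2}\ge\sigma_{\min}^{2}(A)\|x_k-x_\star\|_2^{2}$ (valid because $x_0\in\mathcal{R}(A^{T})$ and the MRABK increment lies in $\mathcal{R}(A^{T})$, so $x_k-x_\star\in\mathcal{R}(A^{T})$ inductively) and telescoping the two separate recursions yields exactly the stated bounds on $\|x_1-x_\star\|_2^{2}$ and $\|x_{k+1}-x_\star\|_2^{2}$.

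The step I expect to require the most care is the one-block estimate giving $1/(t-1)$ rather than $1/t$ for $k\ge 1$. In Theorem~\ref{thm:2.1} this was immediate because the pseudoinverse step forces $b_{\mathcal{V}_{i_k}}-A_{\mathcal{V}_{i_k}}x_{k+1}=0$, so the just-hit block contributes nothing at the next iterate. In the MRABK method the updated block residual $r_{k+1}^{(i_k)}=\bigl(I-\alpha_k A_{\mathcal{V}_{i_k}}A_{\mathcal{V}_{i_k}}^{T}/\|A_{\mathcal{V}_{i_k}}\|_F^{2}\bigr)r_k^{(i_k)}$ is generally nonzero, so one has to replace the literal "one block is zero" reasoning by an estimate exploiting either the structure of this residue or a relative comparison to $\max_i\|r_{k+1}^{(i)}\|_2^{2}$. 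Once this $(t-1)$ factor is justified (the rest of the proof amounts to substituting inequalities), the two advertised bounds and convergence of $\{x_k\}$ to $x_\star=A^\dagger b$ follow at once.
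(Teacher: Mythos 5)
Your plan reproduces the paper's argument almost line for line: expand $\|x_{k+1}-x_\star\|_2^2$, substitute the explicit $\alpha_k$ to produce the identity with the factor $2\omega-\omega^2$, bound $\|A_{\mathcal{V}_{i_k}}^{T}r_k^{(i_k)}\|_2^{2}\le\beta\|r_k^{(i_k)}\|_2^{2}$ via the paving, and then feed the result into the block-counting inequalities from Theorem~\ref{thm:2.1}. Up to the point where you invoke the $1/(t-1)$ estimate, everything you write is correct and is exactly what the paper does.

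The one step you flag as delicate is, however, a genuine gap, and you have not closed it. The inequality $\max_i\|r_k^{(i)}\|_2^2\ge\|b-Ax_k\|_2^2/(t-1)$ for $k\ge1$ (equation \eqref{eq:2.5}) is derived in Theorem~\ref{thm:2.1} solely from the fact that the pseudoinverse update annihilates the selected block's residual, $b_{\mathcal{V}_{i_{k-1}}}-A_{\mathcal{V}_{i_{k-1}}}x_{k}=0$, so that the sum $\|b-Ax_k\|_2^2=\sum_i\|r_k^{(i)}\|_2^2$ has at most $t-1$ nonzero terms. As you correctly observe, the MRABK update leaves $r_{k+1}^{(i_k)}=\bigl(I-\alpha_k A_{\mathcal{V}_{i_k}}A_{\mathcal{V}_{i_k}}^{T}/\|A_{\mathcal{V}_{i_k}}\|_F^{2}\bigr)r_k^{(i_k)}$, which is nonzero in general, so this counting argument does not transfer and no replacement is offered. (The paper itself simply cites \eqref{eq:2.4} and \eqref{eq:2.5} at this point without justification, so the published proof shares the same defect.) What your argument does establish rigorously is the weaker recursion with $1/t$ in place of $1/(t-1)$ for every $k$, i.e.
\begin{equation*}
\|x_{k+1}-x_{\star}\|_{2}^{2}\leq\left(1-(2\omega-\omega^{2})\frac{\sigma_{min}^{2}(A)}{\beta t}\right)^{k+1}\|x_{0}-x_{\star}\|_{2}^{2},
\end{equation*}
which still gives linear convergence to $x_\star=A^{\dagger}b$ but not the stated $(t-1)$ rate. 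To recover the theorem as written you would need a new argument controlling the post-update block residual $r_{k+1}^{(i_k)}$ relative to $\max_i\|r_{k+1}^{(i)}\|_2^2$; neither your proposal nor the paper supplies one.
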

\begin{proof}
	From step 5 of Method \ref{alg:2} and $A_{\mathcal{V}_{i_{k}}}x_{\star}=b_{\mathcal{V}_{i_{k}}}$, we can get
	$$\begin{aligned}
		x_{k+1}-x_{\star}&=x_{k}-x_{\star}-\alpha_{k}\frac{A_{\mathcal{V}_{i_{k}}}^{T}(b_{\mathcal{V}_{i_{k}}}-A_{\mathcal{V}_{i_{k}}}x_{k})}{\|A_{\mathcal{V}_{i_{k}}}\|_{F}^{2}}\\
		&=x_{k}-x_{\star}-\alpha_{k}\frac{A_{\mathcal{V}_{i_{k}}}^{T}A_{\mathcal{V}_{i_{k}}}(x_{k}-x_{\star})}{\|A_{\mathcal{V}_{i_{k}}}\|_{F}^{2}}\\
		&=\left(I-\alpha_{k}\frac{A_{\mathcal{V}_{i_{k}}}^{T}A_{\mathcal{V}_{i_{k}}}}{\|A_{\mathcal{V}_{i_{k}}}\|_{F}^{2}}\right)(x_{k}-x_{\star})
	\end{aligned}$$
	Taking the square of the Euclidean norm for both sides of the above equation, it holds that
	$$\begin{aligned}
		\|x_{k+1}-x_{\star}\|_{2}^{2}&=\|\left(I-\alpha_{k}\frac{A_{\mathcal{V}_{i_{k}}}^{T}A_{\mathcal{V}_{i_{k}}}}{\|A_{\mathcal{V}_{i_{k}}}\|_{F}^{2}}\right)(x_{k}-x_{\star})\|_{2}^{2}\\
		&=\|x_{k}-x_{\star}\|_{2}^{2}-2\alpha_{k}\frac{A_{\mathcal{V}_{i_{k}}}(x_{k}-x_{\star})}{\|A_{\mathcal{V}_{i_{k}}}\|_{F}^{2}}+\alpha_{k}^{2}\frac{A_{\mathcal{V}_{i_{k}}}^{T}A_{\mathcal{V}_{i_{k}}}(x_{k}-x_{\star})}{\|A_{\mathcal{V}_{i_{k}}}\|_{F}^{2}}
	\end{aligned}$$
	Substituting $\alpha_{k}$ into this equality, we have
	$$\begin{aligned}
		\|x_{k+1}-x_{\star}\|_{2}^{2}&=\|x_{k}-x_{\star}\|_{2}^{2}-(2\omega-\omega^{2})\frac{\|b_{\mathcal{V}_{i_{k}}}-A_{\mathcal{V}_{i_{k}}}x_{k}\|_{2}^{4}}{\|A_{\mathcal{V}_{i_{k}}}^{T}(b_{\mathcal{V}_{i_{k}}}-A_{\mathcal{V}_{i_{k}}}x_{k})\|_{2}^{2}}\\
		&\leq\|x_{k}-x_{\star}\|_{2}^{2}-(2\omega-\omega^{2})\frac{\|b_{\mathcal{V}_{i_{k}}}-A_{\mathcal{V}_{i_{k}}}x_{k}\|_{2}^{4}}{\sigma_{max}^{2}(A_{\mathcal{V}_{i_{k}}})\|(b_{\mathcal{V}_{i_{k}}}-A_{\mathcal{V}_{i_{k}}}x_{k})\|_{2}^{2}}\\
		&\leq\|x_{k}-x_{\star}\|_{2}^{2}-(2\omega-\omega^{2})\frac{\|b_{\mathcal{V}_{i_{k}}}-A_{\mathcal{V}_{i_{k}}}x_{k}\|_{2}^{2}}{\beta}\\
		&=\|x_{k}-x_{\star}\|_{2}^{2}-(2\omega-\omega^{2})\frac{\max\limits_{1\le i\le t}\|b_{\mathcal{V}_{i}}-A_{\mathcal{V}_{i}}x_{k}\|_{2}^{2}}{\beta}
	\end{aligned}$$
	Where the first of these inequalities is true since that $2\omega-\omega^{2}>0$ for $\omega \in (0,2)$, and $$\|A_{\mathcal{V}_{i_{k}}}^{T}(b_{\mathcal{V}_{i_{k}}}-A_{\mathcal{V}_{i_{k}}}x_{k})\|_{2}^{2}\leq\sigma_{max}^{2}(A_{\mathcal{V}_{i_{k}}})\|(b_{\mathcal{V}_{i_{k}}}-A_{\mathcal{V}_{i_{k}}}x_{k})\|_{2}^{2}$$
	
	From \eqref{eq:2.4}and \eqref{eq:2.5} in proof of Theorem \ref{thm:2.1}, we can obtain that for $k =1, 2, \ldots$ that
	$$\begin{aligned}
		\|x_{k+1}-x_{\star}\|_{2}^{2}&\leq\|x_{k}-x_{\star}\|_{2}^{2}-(2\omega-\omega^{2})\frac{\|b-Ax_{k}\|_{2}^{2}}{\beta \bigl(t-1)}\\
		&\leq\|x_{k}-x_{\star}\|_{2}^{2}-(2\omega-\omega^{2})\frac{\sigma_{min}^{2}(A)\|\bigl(x_{k}-x_{\star})\|_{2}^{2}}{\beta \bigl(t-1)}\\
		&=\left(1-(2\omega-\omega^{2})\frac{\sigma_{min}^{2}(A)}{\beta \bigl(t-1)}\right)\|x_{k}-x_{\star}\|_{2}^{2}
	\end{aligned}$$
	In particular,when $k=0$ , we have 
	$$\|x_{1}-x_{\star}\|_{2}^{2}\leq\left(1-(2\omega-\omega^{2})\frac{\sigma_{min}^2(A)}{\beta t}\right)\| x_{0}-x_{\star}\|_{2}^{2}$$
\end{proof}
\begin{remark}
	When comparing the convergence factors of the the MRBK and MRABK methods, it was observed that when $\omega$ is set to 1, both methods exhibit equivalent convergence factors. Therefore, when applying these two methods to solve consistent linear systems, their iteration steps should be approximately equal. However, due to the utilization of the average block method in the MRABK method to avoid pseudo-inverse calculations during the update of $x_{k}$, the convergence rate of the MRABK method is faster than that of the MRBK method. The numerical experiment results in section \ref{sec:4} verify our conclusions well in the convergence rate.
\end{remark}
\section{Experimental results}
\label{sec:4}

The efficiency of the MRBK and MRABK methods is verified through numerical experiments in this section. We compare these two methods with the GRK, MRK, RBK, GBK and GRBK methods. For the GBK method, we adopt the same value for $\eta$ as mentioned in \cite{ref11}, i.e., $$\eta=\frac{1}{2}+\frac{1}{2}\frac{\|b-Ax_k\|_2^2}{\|A\|_F^2}\left(\max_{1\le i\le m}\left\{\frac{\left|b^{(i)}-A^{(i)}x_k\right|^2}{\left\|A^{(i)}\right\|_2^2}\right\}\right)^{-1}.$$ In each iteration of the RBK, GBK, GRBK and MRBK methods, we utilize CGLS \cite{ref36} instead of calculating the pseudo-inverse of the submatrix of $A$. Additionally, a unified randomized row partition  $\{A_{\mathcal{V}_{1}},A_{\mathcal{V}_{2}},\ldots,A_{\mathcal{V}_{t}}\}$ defined as \eqref{eq:1.2} is implemented in the RBK, GRBK, MRBK and MRABK methods, and for the selection of the number of blocks, \cite{ref10} proves that $\lceil\|A\|_{2}^{2}\rceil $ is a good choice, so we set the number of blocks $t = \lceil\|A\|_{2}^{2}\rceil$ uniformly, where $\lceil\cdot\rceil $ means round up to an integer. In the case of the MRABK method specifically, $\omega$ is set to 1. We measure the performance of the above methods in terms of the number of iteration steps (denoted by``IT'') and the running time in seconds (denoted by ``CPU"), where IT and CPU are the arithmetic average of the number of iteration steps required and CPU time consumed by the corresponding method to repeat 20 times. To demonstrate the effectiveness, we determine the speed-up value of the MRBK method against the MRK (or GRBK) method and the speed-up value of the MRABK method against the MRBK method, which are respectively defined by
$$\text{SU}_{1} = \frac{\text{CPU of MRK}}{\text{CPU of  MRBK}} ,$$ 
$$\text{SU}_{2} = \frac{\text{CPU of GRBK}}{\text{CPU of  MRBK}} ,$$
$$\text{SU}_{3} = \frac{\text{CPU of MRBK}}{\text{CPU of MRABK}} .$$ 

For consistent linear systems \eqref{eq:1.1}, the coefficient matrix is either given by MATLAB function $\mathbf{sprandn}$ or taken from the SuiteSparse Matrix Collection \cite{ref36}.  For the coefficient matrix $A$ being tested, any zero row vectors are removed and $A$ is normalized to the standard matrix.  The right vector is set to $b=Ax_{*}$, where vector $x_{*}$ is the solution vector generated using the MATLAB function $\mathbf{randn}$.  All calculations start with the initial zero vector $x_{0}=0$ and terminate once the relative solution error (RSE) at the current iteration satisfies that $\text{RSE} < 10^{-6}$ or when the number of iteration steps exceeds the maximum of 200000, defined as
$$\text{RSE}=\frac{\|x_{k}-x_{\star}\|_{2}^{2}}{\|x_{\star}\|_{2}^{2}},$$
where the minimum norm solution $x_{\star}$ is obtained using the MATLAB function $\mathbf{lsqminorm}$. All experiments are carried out using MATLAB ((version R2023a) on a personal computer with 1.60 GHz central processing unit (Intel(R) Core(TM) i5-8250U CPU), 8.00 GB memory, and Windows 10 system.

Define the density of a matrix as
$$\text{density} = \frac{\text{number of nonzeros of an m-by-n matrix}}{mn}$$
For the first kind of sparse matrix A, we set its sparse parameter is 0.01, i.e. $\text{A = sprandn (m, n, 0.01)}$, the iteration steps and computing time for the GRK, RBK, GBK, GRBK, MRBK, and MRABK methods are listed in Table \ref{tab:1} and Table \ref{tab:2}, and we also list the speed-up values for several methods.

\begin{table}[!h]
	\centering
	\caption{Numerical results for \(m\)-by-\(n\) random matrices \(A\) with \(m = 6000\) and different \(n\).}
	\label{tab:1}
	\begin{tabular}{lllllll}
		\toprule
		\(m\times n\)&   & 6000$\times$1000 & 6000$\times$1500 & 6000$\times$2000 & 6000$\times$2500 & 6000$\times$3000  \\
		\midrule
		\(\|A\|_2^{2}\) & & 12.29 & 9.13 & 7.64 & 6.63 & 5.86 \\
		\addlinespace
		\multirow{2}{*}{GRK} &IT & 2410.2  & 5203.9  & 10052.8 & 19441.4 & 34776.8 \\
		&CPU & 1.6381 & 3.9657 & 10.4241 & 27.3900 & 56.0059 \\
		\addlinespace
		\multirow{2}{*}{MRK} &IT & 2307.0 & 4929.0  & 9869.0& 19460.0 & 34874.0  \\
		&CPU & 0.5178 & 1.1877 & 3.3316 & 11.6620 & 23.9556\\
		\addlinespace
		\multirow{2}{*}{RBK} &IT & 28.9  & 41.7 & 55.5 & 83.3 & 117.4 \\
		&CPU & 0.0968 & 0.1514 & 0.2877 & 0.4767 & 0.7521\\
		\addlinespace  
		\multirow{2}{*}{GBK} &IT & 41.0  & 74.0 & 114.0 & 200.0 & 325.0 \\
		&CPU & 0.1602 & 0.3303 & 0.7082 & 1.5373 & 2.7144\\
		\addlinespace  
		\multirow{2}{*}{GRBK} &IT & 22.8  & 30.0 & 37.4 & 54.1 & 71.8  \\
		&CPU & 0.1301 & 0.1915 & 0.3297 & 0.5296 & 0.7835\\
		\addlinespace 
		\multirow{2}{*}{MRBK} &IT & \textbf{21.0} & \textbf{29.0} & \textbf{36.0} & \textbf{51.0} & \textbf{68.0} \\
		&CPU & 0.0705 & 0.1100 & 0.2081 & 0.3108 & 0.4529\\
		\addlinespace 
		\multirow{2}{*}{MRABK} &IT & 38.0 & 51.0 & 59.0 & 81.0 & 104.0 \\
		&CPU & \textbf{0.0256} & \textbf{0.0427} & \textbf{0.0859} & \textbf{0.1176} & \textbf{0.1647}\\
		\addlinespace 
		$SU_1$ & & 7.35 & 10.80 & 16.01 & 37.52 & 52.90\\
		\addlinespace
		$SU_2$ & & 1.84 & 1.74 & 1.58 & 1.70 & 1.73\\
		\addlinespace
		$SU_3$ & & 2.75 & 2.58 & 2.42 & 2.64 & 2.75\\  
		\bottomrule
	\end{tabular}
\end{table}

From Table \ref{tab:1}, we can find that when m=6000, n= 1000, 1500, 2000, 2500, 3000, both the MRBK method and MRABK method proposed by us are superior to other methods in terms of computing time. Now we focus on the MRK, GRBK, MRBK and MRABK methods. We observe that the MRBK method, as a block-improved version of the MRK method, is far superior to the latter in terms of iteration steps and computing time. The speed-up value of these two methods ($\text{SU}_{1}$) is at least 7.35 and the maximum is 52.90. In Section \ref{sec:2}, we analyzed the convergence factors of the GRBK and MRBK methods, and concluded that their iteration steps should be very close when the size of $A$ is large. From Table \ref{tab:1}, we can observe that the iteration steps of the two methods are almost the same, and the speed-up value ($\text{SU}_{2}$) is at least 1.58 and at most 1.84. The MRABK method has the shortest computing time among all the above methods, and its speed-up value relative to the MRBK method ($\text{SU}_{3}$) is at least 2.42 and up to 2.75.

From Table \ref{tab:2}, we can find that when n=6000, m= 1000, 1500, 2000, 2500, 3000, the MRBK and MRABK methods are still superior to other methods. Among all the above methods, the MRBK method has the fewest iteration steps, while the MRABK method has the shortest computing time. We note that under these conditions, the speed-up value of the MRBK method to the MRK method ($\text{SU}_{1}$) is at least 48.43 and the maximum is 100.99. Even though the number of iteration steps of the MRBK and GRBK methods are almost the same, the computing time of the MRBK method is better than that of the GRBK method. Compared with the MRBK method, the speed-up value of the MRABK method ($\text{SU}_{3}$) is at least 2.10 and at most 4.11.

\begin{table}[!h]
	\centering
	\caption{Numerical results for \(m\)-by-\(n\) random matrices \(A\) with \(n = 6000\) and different \(m\).}
	\label{tab:2}
	\begin{tabular}{lllllll}
		\toprule
		\(m\times n\)&   & 1000$\times$6000 & 1500$\times$6000 & 2000$\times$6000 & 2500$\times$6000 & 3000$\times$6000  \\
		\midrule
		\(\|A\|_2^{2}\) & & 2.01 & 2.24 & 2.50 & 2.69 & 2.93 \\
		\addlinespace
		\multirow{2}{*}{GRK} &IT & 4115.8  & 8416.0 &15133.5 & 26630.7 & 45296.8 \\
		&CPU & 5.4529 & 13.1930 & 26.1621 & 50.7433 & 95.8928\\
		\addlinespace
		\multirow{2}{*}{MRK} &IT & 4159.0 & 8384.0 & 15018.0 & 27038.0 & 45544.0 \\
		&CPU & 3.1494 & 7.4153 & 14.1870 & 26.9432 & 48.8983\\
		\addlinespace
		\multirow{2}{*}{RBK} &IT & 19.6 & 46.1 & 58.6 & 80.8 & 107.6 \\
		&CPU & 0.1214 & 0.3049 & 0.4108 & 0.6535 & 0.8816	\\
		\addlinespace 
		\multirow{2}{*}{GBK} &IT & 58.0 & 91.0 & 134.0 & 207.0 & 318.0 \\
		&CPU & 0.2652 & 0.5340 & 0.9043 & 1.7528 & 3.1349\\
		\addlinespace 
		\multirow{2}{*}{GRBK} &IT & 10.0 & 21.4 & 28.6 & \textbf{39.6} & 56.8  \\
		&CPU & 0.0775 & 0.1802 & 0.2799 & 0.4661 & 0.6960\\
		\addlinespace 
		\multirow{2}{*}{MRBK} &IT & \textbf{10.0} & \textbf{21.0}  & \textbf{28.0} & 40.0 & \textbf{56.0} \\
		&CPU & 0.0650 & 0.1371 & 0.1988 & 0.3397 & 0.4842\\
		\addlinespace 
		\multirow{2}{*}{MRABK} &IT & 19.0 & 30.0 & 40.0 & 55.0 & 75.0  \\
		&CPU & \textbf{0.0309} & \textbf{0.0449} & \textbf{0.0563} & \textbf{0.0826} & \textbf{0.1264}\\
		\addlinespace 
		$SU_1$ & & 48.43 & 54.10 & 71.35 & 79.31 & 100.99\\
		\addlinespace
		$SU_2$ & & 1.19 & 1.31 & 1.41 & 1.37 & 1.44\\
		\addlinespace
		$SU_3$ & & 2.10 & 3.05 & 3.53 & 4.11 & 3.83\\  
		\bottomrule
	\end{tabular}
\end{table}

For the second type of coefficient matrices selected from the SuiteSparse Matrix Collection \cite{ref35}, which are derived from different applications, such as linear programming problem and combinatorial problem. These matrices have some special structures and properties, such as thin (m\textgreater n) (e.g., Franz9, GL7d12), square (m=n)(e.g., Trefethen\_700), or fat (m\textless n)(e.g., p6000, lp\_80bau3b), we list details about them in Table \ref{tab:3}, including their size, density, and cond ($A$). For these matrices, we implement the GRK, RBK, GBK, GRBK, MRBK and MRABK methods, and each method of iteration steps and computing time are listed in Table \ref{tab:3}.

In Table \ref{tab:3}, we observe that the MRABK method still maintains the shortest computing time. When the coefficient matrix is Franz9 or GL7d12, the GBK method has fewer iteration steps and shorter computing time than MRBK method. When the coefficient matrix is Trefethen\_700, the MRBK method has the smallest number of iteration steps, and when the coefficient matrix is p6000 or  lp\_80bau3b , the number of iteration steps is almost the same as that of the GRBK method. Compared to the MRK method, the MRBK method exhibits a minimum speed-up value of 1.04 and a maximum speed-up value of 28.04. In comparison to the GRBK method, the MRBK method demonstrates a minimum speed-up value of 1.03 and a maximum speed-up value of 1.77. Furthermore, compared to the MRBK method, the MRABK method showcases a minimum speed-up value of 2.37, reaching a maximum speed-up value of 4.05.

\begin{table}[!h]
	\centering
	\caption{Numerical results for matrices \(A\) from the SuiteSparse Matrix Collection.}
	\label{tab:3}
	\begin{tabular}{lllllll}
		\toprule
		name&   & Franz9 & GL7d12 & Trefethen\_700 & p6000 & lp\_80bau3b  \\
		\(m\times n\)&   & 19588$\times$4164 & 8899$\times$1019 & 700$\times$700 & 2095$\times$7967 & 2262$\times$12061  \\
		density & & 0.12\% & 0.41\% & 2.58\% & 0.12\% & 0.09\% \\
		cond($A$) & & 1.46e+16 & Inf & 4.71e+03 & 6.50e+05 & 567.23 \\
		\(\|A\|_2^{2}\) & & 60.80 & 55.75 & 2.54 & 2.35 & 2.82 \\
		\midrule
		\addlinespace
		\multirow{2}{*}{GRK} &IT & 9010.2 & 2604.2 & 1103.2 & 4959.6 & 15856.5 \\
		&CPU & 10.5937 & 1.3168 & 0.1723 & 3.2435 & 12.4952\\
		\addlinespace
		\multirow{2}{*}{MRK} &IT & 8707.5 & 2393.0 & 1093.0 & 4980.0 & 15877.0 \\
		&CPU & 2.9570 & 0.3401 & 0.0573 & 2.1526 & 8.4222\\
		\addlinespace
		\multirow{2}{*}{RBK} &IT & 380.5 & 877.8 & 42.1 & 35.0 & 72.7 \\
		&CPU & 1.8568 & 1.9973 & 0.0597 & 0.1279 & 0.5702\\
		\addlinespace  
		\multirow{2}{*}{GBK} &IT & \textbf{96.2} & \textbf{62.0} & 54.0 & 88.0 & 270.0 \\
		&CPU & 0.5249 & 0.1212 & 0.0638 & 0.3141 & 1.0859\\
		\addlinespace 
		\multirow{2}{*}{GRBK} &IT & 177.0 & 158.9 & 16.9  & \textbf{17.4} & \textbf{34.5} \\
		&CPU & 1.7645 & 0.7016 & 0.0402 & 0.0954 & 0.3300\\
		\addlinespace 
		\multirow{2}{*}{MRBK} &IT & 170.0 & 160.0 & \textbf{12.0} & 18.0  & 36.0 \\
		&CPU & 1.0197 & 0.3274 & 0.0247 & 0.0768 & 0.3206\\
		\addlinespace 
		\multirow{2}{*}{MRABK} &IT & 533.5 & 241.0 & 40.0 & 32.0 & 127.0 \\
		&CPU & \textbf{0.3895} & \textbf{0.0808} & \textbf{0.0070} & \textbf{0.0332} & \textbf{0.0862}\\
		\addlinespace 
		$SU_1$ & & 2.90 & 1.04 & 2.32 & 28.04 & 26.27\\
		\addlinespace
		$SU_2$ & & 1.73 & 1.77 & 1.63 & 1.24 & 1.03\\
		\addlinespace
		$SU_3$ & & 2.62 & 4.05 & 3.52 & 2.31 & 3.72\\  
		\bottomrule
	\end{tabular}
\end{table}

\begin{figure}[!htbp]
	\centering
	\subfigure[]{
		\includegraphics[width=0.45\textwidth]{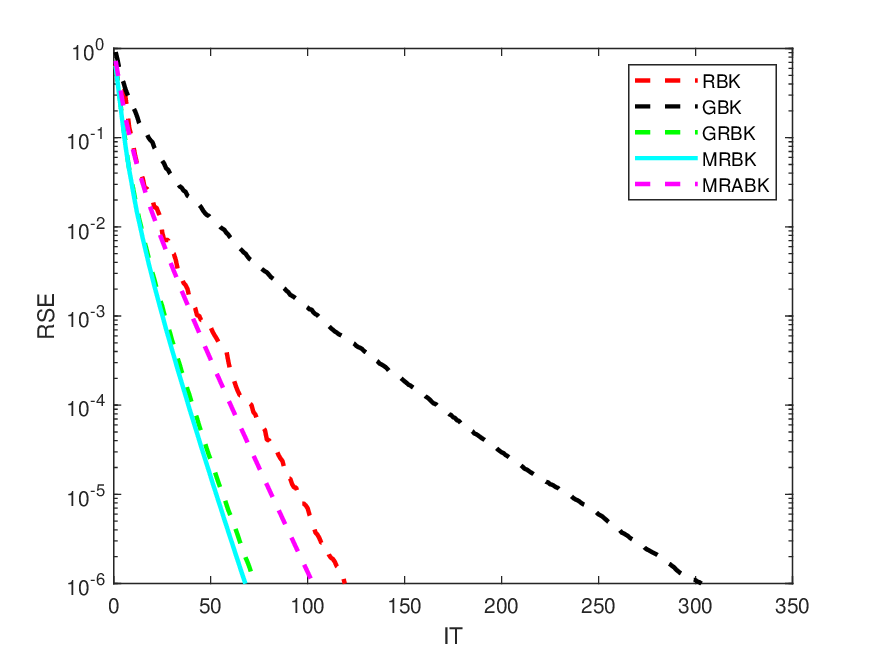}
	}
	\subfigure[]{
		\includegraphics[width=0.45\textwidth]{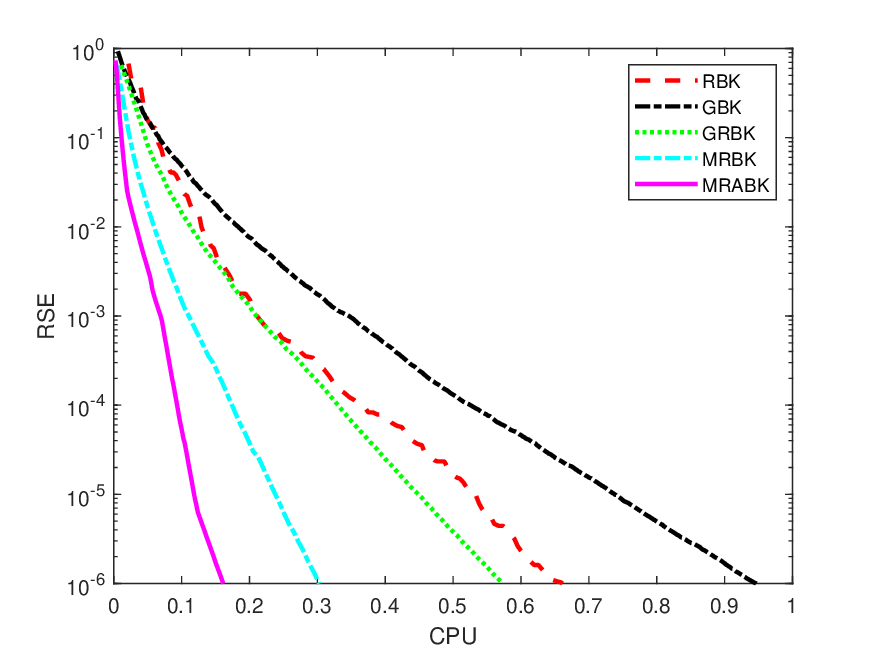}
	}
	\caption{RSE versus IT  (a) and RSE versus CPU (b) for different block Kaczmrarz methods  when the coefficient matrices are the $6000 \times 3000$ matrix in Table \ref{tab:1}.}
	\label{fig:1}
\end{figure}

\begin{figure}[!htbp]
	\centering
	\subfigure[]{
		\includegraphics[width=0.45\textwidth]{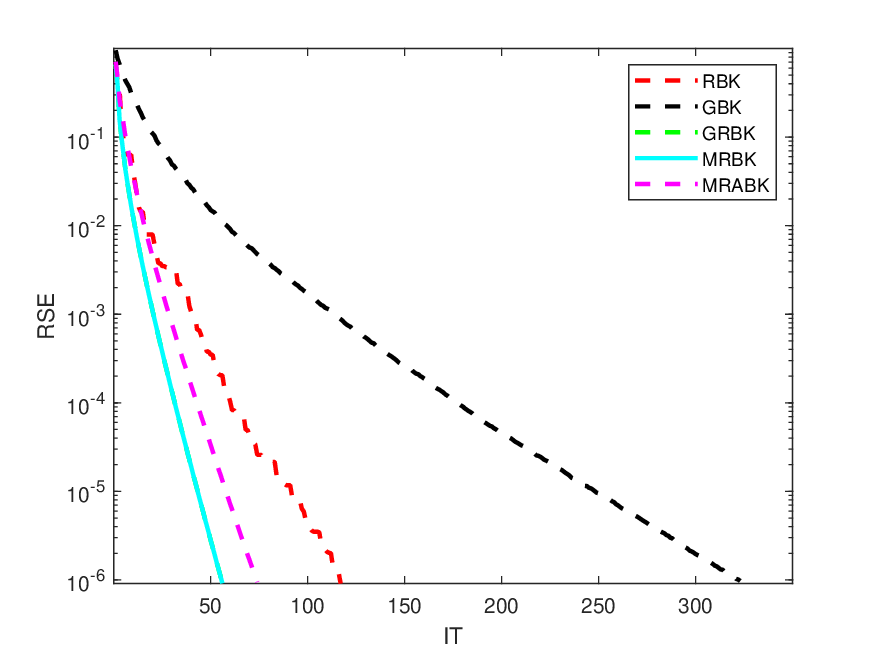}
	}
	\subfigure[]{
		\includegraphics[width=0.45\textwidth]{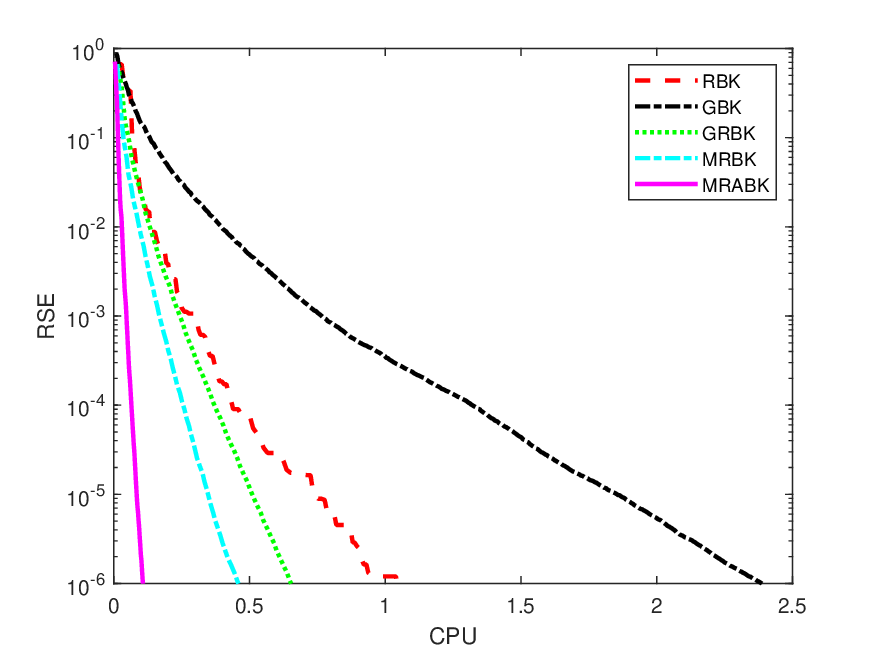}
	}
	\caption{RSE versus IT  (a) and RSE versus CPU (b) for different block Kaczmrarz methods  when the coefficient matrices are the $3000 \times 6000$ matrix in Table \ref{tab:2}. }
	\label{fig:2}
\end{figure}

\begin{figure}[!htbp]
	\centering
	\subfigure[]{
		\includegraphics[width=0.45\textwidth]{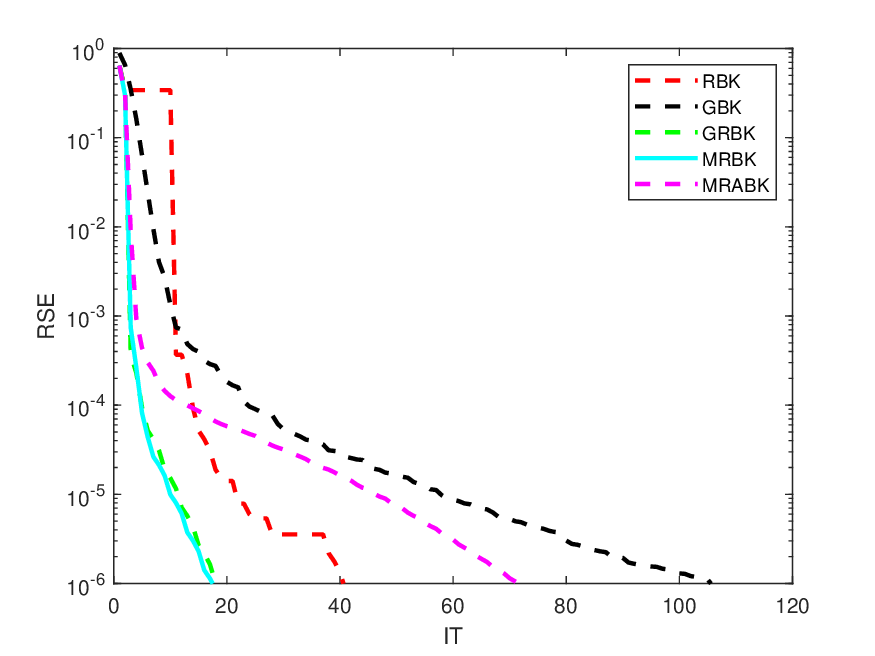}
	}
	\subfigure[]{
		\includegraphics[width=0.45\textwidth]{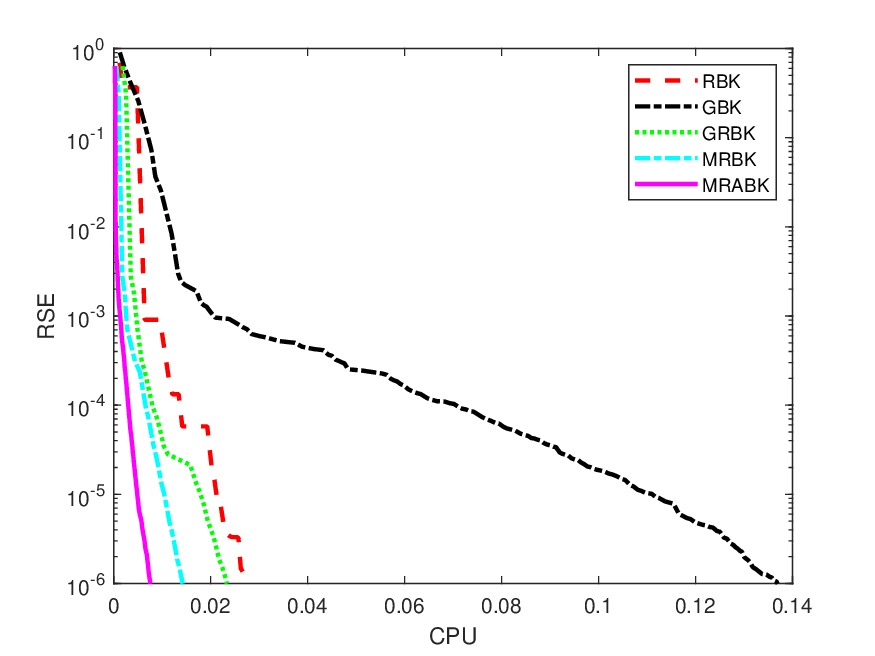}
	}
	\caption{RSE versus IT (a) and RSE versus CPU (b) for different block Kaczmrarz methods on matrix \textbf{Trefethen\_700}. }
	\label{fig:3}
\end{figure}

In order to further analyze the performance of different block Kaczmraz methods, the relationship curves between the relative solution error and the number of iteration steps and the relationship curves between the relative solution error and the computing time for different block Kaczmarz methods were drawn in Figure \ref{fig:1}, Figure \ref{fig:2} and Figure \ref{fig:3}, respectively. Their coefficient matrices are derived from the $6000\times3000$ matrix in Table \ref{tab:1}, the $3000\times6000$ matrix in Table \ref{tab:2}, and the matrix \text{Trefethen\_700} in Table \ref{tab:3}. As we can see from Figure \ref{fig:1}, Figure \ref{fig:2} and Figure \ref{fig:3}, with the increase of the iteration steps, the relative solution errors of the MRBK and MRABK methods both decrease faster than the RBK and GBK methods, and the MRBK method has the fastest decline in relative solution errors among all block Kaczmarz methods. With the increase of the computing time, the relative solution error of the MRABK method decreases the fastest, followed by the MRBK method, both of which are ahead of the RBK, GBK and GRBK methods.

\section{Conclusions}
\label{sec:5}

This paper presents two new Kaczmarz (MRBK and MRABK) methods for solving consistent linear systems. Both methods utilize uniform randomized partition of the rows of matrix $A$ to construct the row subsets $\{A_{\mathcal{V}_{1}},A_{\mathcal{V}_{2}},\ldots,A_{\mathcal{V}_{t}}\}$. In each iteration, the MRBK method updates the solution vector $x$ by projecting $A_{\mathcal{V}_{i_{k}}}$ onto the hyperplane corresponding to $A_{\mathcal{V}_{i_{k}}}$, where $\mathcal{V}_{i_{k}}$ is selected according to  $i_{k}=\arg\max\limits_{1\le i\le t}\|b_{\mathcal{V}_{i}}-A_{\mathcal{V}_{i}}x_{k}\|_2^2 $, ensuring that the block with the largest residual is eliminated first, leading to rapid convergence. Building upon the MRBK method, the MRABK method introduces an adaptive step size, eliminating the need for calculating the pseudo-inverse of the row subset $A_{\mathcal{V}_{i_{k}}}$ of $A$ during the $x$ update process, thereby enhancing the convergence speed. We provide a comprehensive analysis of the convergence theory for these two methods and conduct numerical experiments to validate their effectiveness. Both the theoretical analysis and numerical results demonstrate the superiority of our proposed methods over other Kaczmarz methods, including the GRK, MRK, RBK, GBK, and GRBK methods. In addition, we have realized that some valuable topics deserve further study, such as fully considering the structure and properties of $A$, making more efficient row partition of $A$, and finding a better step size for the MRABK method.

\section*{Declarations}
\textbf{Competing interests} The authors declare no competing interests.




\end{document}